\newcommand{\R}{\mathbb{R}}
\newcommand{\C}{\mathbb{C}}
\newcommand{\N}{\mathbb{N}}
\newcommand{\Z}{\mathbb{Z}}
\renewcommand{\Re}{\mbox{Re}\,}
\renewcommand{\Im}{\mbox{Im}\,}
\newtheorem{theo}{Theorem}
\newtheorem{lem}[theo]{Lemma}
\newtheorem{rem}[theo]{Remark}
\newtheorem{example}[theo]{Example}
\title[A characterization of chaos for weighted composition $C_0$-semigroups]{A simple characterization of chaos for weighted composition $C_0$-semigroups on Lebesgue and Sobolev spaces}
\author{T.\ Kalmes}
\begin{document}

\begin{abstract}
We give a simple characterization of chaos for weighted composition $C_0$-semigroups on $L^p_\rho(\Omega)$ for an open interval $\Omega\subseteq\R$. Moreover, we characterize chaos for these classes of $C_0$-semigroups on the closed subspace $W^{1,p}_*(\Omega)$ of the Sobolev space $W^{1,p}(\Omega)$ for a bounded interval $\Omega\subset\R$. These characterizations simplify the characterization of chaos obtained in \cite{ArKaMa13} for these classes of $C_0$-semigroups.
\end{abstract}

\maketitle

\section{Introduction}

The purpose of this article is to give a simple characterization of chaos for certain weighted composition $C_0$-semigroups on Lebesgue spaces and Sobolev spaces over open intervals. Recall that a $C_0$-semigroup $T$
on a separable Banach space $X$ is called {\it chaotic} if $T$ is hypercyclic, i.e.\ there is $x\in X$ such that $\{T(t)x;\,t\geq 0\}$ is dense in $X$,
and if the set of periodic points, i.e.\ $\{x\in X;\,\exists t>0: T(t)x=x\}$, is dense in $X$.

The study of chaotic $C_0$-semigroups has attracted the attention of many researchers. We refer the reader to Chapter 7 of the  monograph by Grosse-Erdmann and Peris \cite{GEPe11} and the references therein.
Some recent papers in the topic are
\cite{AlBaMaPe13,ArPe12,BaMo11,EmGoGo12,Ru12}.

For $\Omega\subseteq\R$ open and a Borel measure $\mu$ on $\Omega$ admitting a strictly positive Lebesgue density $\rho$ we consider $C_0$-semigroups $T$ on
$L^p(\Omega,\mu), 1\leq p<\infty,$ of the form
\[T(t)f(x)=h_t(x) f(\varphi(t,x)),\]
where $\varphi$ is the solution semiflow of an ordinary differential equation
\[\dot{x}=F(x)\]
in $\Omega$ and
\[h_t(x)=\exp\big(\int_0^t h(\varphi(s,x))ds\big)\]
with $h\in C(\Omega)$. Such $C_0$-semigroups appear in a natural way when dealing with initial value problems for linear first order partial differential operators. While a characterization of chaos for such $C_0$-semigroups was obtained for open $\Omega\subseteq\R^d$ for arbitrary dimension $d$ in \cite{Ka07}, evaluation of
these conditions in concrete examples is sometimes rather involved. In contrast to general dimension the case $d=1$ allows for a significantly simplified characterization, see \cite{ArKaMa13}. However, this characterization of chaos still depends on the knowledge of the solution semiflow $\varphi$ which might be difficult to determine in concrete examples.

In section 2 we give, under mild additional assumptions on $F$ and $h$, a characterization of chaos which only depends on the ingredients $F$, $h$, and $\rho$, without refering to the semiflow $\varphi$.

In section 3 we use this result to obtain a similarly simple characterization of chaos for the above kind of $C_0$-semigroups acting on the closed subspace
\[W^{1,p}_*[a,b]=\{f\in W^{1,p}[a,b];\,f(a)=0\}\]
of the Sobolev spaces $W^{1,p}[a,b]$, where $(a,b)\subseteq\R$ is a bounded interval. It was shown in \cite{ArKaMa13} that such $C_0$-semigroups cannot be hypercyclic, a fortiori chaotic, on the whole Sobolev space $W^{1,p}(a,b)$.

In order to illustrate our results, several examples are considered.

\section{Chaotic weighted composition $C_0$-semigroups on Lebesgue spaces}

Let $\Omega\subseteq\R$ be open and let $F:\Omega\rightarrow\R$ be a $C^1$-function. Hence, for every $x_0\in\Omega$ there is a unique solution $\varphi(\cdot,x_0)$ of the initial value problem
\[\dot{x}=F(x),\; x(0)=x_0.\]
Denoting its maximal domain of definition by $J(x_0)$ it is well-known that $J(x_0)$ is an open interval containing $0$. We make the general assumption that $\Omega$ is {\it forward invariant under} $F$, i.e.\
$[0,\infty)\subset J(x_0)$ for every $x_0\in\Omega$, that is $\varphi:[0,\infty)\rightarrow\Omega$. This is true, for example, if $\Omega=(a,b)$ is a bounded interval and if $F$ can be extended to a $C^1$-function defined on a neighborhood of $[a,b]$ such that $F(a)\geq 0$ and $F(b)\leq 0$ (cf.\ \cite[Corollary 16.10]{Amann}).

From the uniqueness of the solution it follows that $\varphi(t,\cdot)$ is injective for
every $t\geq 0$ and $\varphi(t+s,x)=\varphi(t,\varphi(s,x))$ for all $x\in\Omega$ and $s,t\in J(x)$ with $s+t\in J(x)$. Moreover, for every $t\geq 0$ the set $\varphi(t,\Omega)$ is open and for $x\in\varphi(t,\Omega)$
we have $[-t,\infty)\subset J(x)$ as well as $\varphi(-s,x)=\varphi(s,\cdot)^{-1}(x)$ for all $s\in [0,t]$. Since $F$ is a $C^1$-function it is well-known that the same is true for
$\varphi(t,\cdot)$ on $\Omega$ and $\varphi(-t,\cdot)$ on $\varphi(t,\Omega)$ for every $t\geq 0$.

Moreover, let $h\in C(\Omega)$ and define for $t\ge 0$
\[h_t:\Omega\rightarrow\C, h_t(x)=\exp(\int_0^t h(\varphi(s,x))ds).\]
For $1\leq p<\infty$ and a measurable function $\rho:\Omega\rightarrow (0,\infty)$ let $L^p_\rho(\Omega)$ be as usual
the Lebesgue space of $p$-integrable functions with respect to the Borel measure $\rho d\lambda$, where $\lambda$
denotes Lebesgue measure. If $\Omega$ is forward invariant under $F$ the operators
\[T(t):L^p_\rho(\Omega)\rightarrow L^p_\rho(\Omega), (T(t)f)(x):=h_t(x)f(\varphi(t,x))\;(t\geq 0)\]
are well-defined continuous linear operators defining a $C_0$-semigroup $T_{F,h}$ on $L^p_\rho(\Omega)$ if $\rho$ is
$p$-{\it admissible for} $F$ {\it and} $h$, i.e.\ if there are constants $M\geq 1$, $\omega\in\R$ with
\[\forall\,t\geq 0,x\in\Omega:\,|h_t(x)|^p\rho(x)\leq M e^{\omega t}\rho(\varphi(t,x))\exp(\int_0^t F'(\varphi(s,x))ds),\]
(see \cite{ArKaMa13}). Because $|h_t(x)|^p=\exp(p\int_0^t\Re h(\varphi(s,x))ds)$ it follows that $\rho=1$ is 
$p$-admissible for any $p$ if $\Re h$ is bounded above and $F'$ is bounded below, i.e.\ in this case the above operators
define a $C_0$-semigroup $T_{F,h}$ on the standard Lebesgue spaces $L^p(\Omega)$. Under mild additional assumptions on $F$ and $h$ the generator of 
this $C_0$-semigroup is given by the first order differential operator $Af=F f'+hf$ on a suitable subspace of 
$L^p(\Omega)$ (see \cite[Theorem 15]{ArKaMa13}).

In \cite[Theorem 6 and Proposition 9]{ArKaMa13} it is characterized when the $C_0$-semigroup $T_{F,h}$ is chaotic on $L^p_\rho(\Omega)$. However, this characterization depends on a more or less
explicit knowledge of the semiflow $\varphi$.

Our aim is to prove the following characterization of chaos for $T_{F,h}$ on $L^p_\rho(\Omega)$ valid under mild additional assumptions on $F$ and $h$ and which is given solely in terms of $F$, $h$, and $\rho$. Throughout this article, we use the following common abbreviation $\{F=0\}:=\{x\in\Omega;\,F(x)=0\}$.  

\begin{theo}\label{simplified chaos}
	For $1\leq p<\infty$ let $\Omega\subset\R$ be an open interval which is forward invariant under $F\in C^1(\Omega)$, and let $h\in C(\Omega)$ be such
	that $F'$ and $\Re h$ are bounded and
	\begin{itemize}
		\item[a)] There is $\gamma\in\R$ such that $h(x)=\gamma$ for all $x\in\{F=0\}$.
		\item[b)] With $\alpha:=\inf\Omega$ and $\omega:=\sup\Omega$ the function
		\[\Omega\rightarrow\C,y\mapsto\frac{\Im h(y)}{F(y)}\]
		belongs to $L^1((\alpha,\beta))$ for all $\beta\in\Omega$ or to $L^1((\beta,\omega))$ for all $\beta\in\Omega$.
	\end{itemize}
	Then for every $\rho$ which is $p$-admissible for $F$ and $h$ the following are equivalent.
	\begin{itemize}
		\item[i)] $T_{F,h}$ is chaotic in $L^p_\rho(\Omega)$.
		\item[ii)] $\lambda(\{F=0\})=0$ and for every connected component $C$ of $\Omega\backslash\{F=0\}$
		\[\int_C\exp(-p\int_x^w\frac{\Re h(y)}{F(y)}dy)\rho(w)d\lambda(w)<\infty\]
		for some/all $x\in C$.
	\end{itemize}
\end{theo}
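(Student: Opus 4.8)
The plan is to reduce the statement to the characterization of chaos for $T_{F,h}$ on $L^p_\rho(\Omega)$ obtained in \cite[Theorem 6 and Proposition 9]{ArKaMa13}, whose hypotheses are formulated through the semiflow $\varphi$, and to use assumptions a) and b) to evaluate those hypotheses purely in terms of $F$, $h$, $\rho$. A preliminary observation disposes of $\{F=0\}$: there $\varphi(t,\cdot)=\mathrm{id}$ and, by a), $h_t\equiv e^{t\gamma}$, so $L^p_\rho(\{F=0\})$ and $L^p_\rho(\Omega\setminus\{F=0\})$ are complementary $T_{F,h}$-invariant subspaces on the first of which $T_{F,h}(t)=e^{t\gamma}\mathrm{id}$; passing to the corresponding quotient would make a nonzero scalar semigroup hypercyclic whenever $\lambda(\{F=0\})>0$, so chaos of $T_{F,h}$ forces $\lambda(\{F=0\})=0$, which we assume henceforth.

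Fix a connected component $C$ of the open set $\Omega\setminus\{F=0\}$; on $C$ the function $F$ has constant sign, and we treat the case $F>0$, the case $F<0$ being symmetric. Choosing $x_C\in C$ and setting $u_C(x):=\int_{x_C}^x dy/F(y)$ yields an increasing $C^1$-diffeomorphism $u_C:C\to I_C:=(\tau_-,\tau_+)$ with $u_C(\varphi(t,x))=u_C(x)+t$; with $\psi_C:=u_C^{-1}$ one has $\psi_C'=F\circ\psi_C$ and $\varphi(t,\psi_C(\tau))=\psi_C(\tau+t)$. Forward invariance of $\Omega$ forces $\tau_+=+\infty$, whereas $\tau_-\in[-\infty,0)$ may be finite (if the left endpoint of $C$ is $\alpha\notin\Omega$) or $-\infty$ (if it is a zero of $F$). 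The isometric isomorphism $V_C:L^p_\rho(C)\to L^p(I_C)$, $(V_Cf)(\tau):=f(\psi_C(\tau))\big(\rho(\psi_C(\tau))F(\psi_C(\tau))\big)^{1/p}$, conjugates $T_{F,h}(t)|_{L^p_\rho(C)}$ to a weighted translation semigroup $S_C(t)g(\tau)=w^C_t(\tau)g(\tau+t)$ on $L^p(I_C)$; substituting $s\mapsto\psi_C(\tau+s)$ in $\int_0^t h(\varphi(s,x))\,ds$ and using the identity $\exp\big(\int_0^t F'(\varphi(s,x))\,ds\big)=F(\varphi(t,x))/F(x)$ (valid on $C$) gives
\[|w^C_t(\tau)|^p=\exp\!\Big(p\!\int_{\psi_C(\tau)}^{\psi_C(\tau+t)}\!\frac{\Re h(y)}{F(y)}\,dy\Big)\,\frac{\rho(\psi_C(\tau))F(\psi_C(\tau))}{\rho(\psi_C(\tau+t))F(\psi_C(\tau+t))},\]
with the phase of $w^C_t(\tau)$ equal to $\exp\big(i\int_{\psi_C(\tau)}^{\psi_C(\tau+t)}\Im h(y)/F(y)\,dy\big)$.

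Multiplication by $\Lambda_C(\tau):=\exp\big(\int_{x_C}^{\psi_C(\tau)}\Re h(y)/F(y)\,dy\big)\big(\rho(\psi_C(\tau))F(\psi_C(\tau))\big)^{-1/p}$ turns the modulus of $S_C$ into the unweighted translation $g\mapsto g(\cdot+t)$ on the weighted space $L^p(I_C,\mu_C)$ with $d\mu_C(\tau)=\Lambda_C(\tau)^{-p}\,d\tau$; undoing the substitution shows $\mu_C(I_C)=\int_C\exp\big(-p\int_x^w\Re h(y)/F(y)\,dy\big)\rho(w)\,d\lambda(w)$, i.e.\ precisely the integral in ii) (the base point $x_C$ only changes this by a positive factor, consistently with the "some/all" in ii)). Assumption b) says that $\int_{x_C}^{\psi_C(\tau)}\Im h(y)/F(y)\,dy$ has a finite limit as $\tau\to\tau_-$, resp.\ as $\tau\to\tau_+$; hence the phase factor above is itself a ratio of values of a bounded function bounded away from $0$, so it can be absorbed as well and $S_C$ becomes similar to the unweighted translation semigroup $R_C(t)g=g(\cdot+t)$ on $L^p(I_C,\mu_C)$. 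By the known characterization of chaos for translation semigroups on weighted $L^p$-spaces over an interval (cf.\ \cite[Chapter 7]{GEPe11} and \cite{ArKaMa13}; here forward invariance excludes a finite endpoint downstream of the flow, and a) controls the rate at which $\mu_C$ decays toward any endpoint of $I_C$ that is a zero of $F$), $R_C$ is chaotic if and only if $\mu_C(I_C)<\infty$. Since \cite[Theorem 6 and Proposition 9]{ArKaMa13} already reduces chaos of $T_{F,h}$ on $\Omega$ to such a componentwise statement, assembling over all $C$ yields i)$\iff$ii). One may also see ii) directly from the Godefroy--Shapiro criterion, noting that the functions $f_z(x)=\exp\big(\int_{x_C}^x(z-h(y))/F(y)\,dy\big)$ are eigenfunctions of the generator $f\mapsto Ff'+hf$ and that $f_z\in L^p_\rho(C)$ exactly when $\int_{I_C}e^{p\,\Re z\,\tau}\,d\mu_C(\tau)<\infty$.

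The substantive point is this last interplay: while the flow-box change of variables and the computation of $|w^C_t|$ are routine, one must check carefully how, in the presence of the density $\rho$ and of the oscillatory factor $\exp(i\int\Im h/F)$, hypotheses a) and b) collapse the several semiflow-dependent conditions of \cite[Theorem 6 and Proposition 9]{ArKaMa13} down to the single requirement $\mu_C(I_C)<\infty$ for every component $C$ — in particular, that a) produces eigenfunctions $f_z$ with $\Re z$ ranging over an open interval around $0$ (so the Godefroy--Shapiro criterion applies) and that b) is exactly what makes the phase removable.
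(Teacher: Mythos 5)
Your flow-box computation is sound and parallels the paper's Lemma \ref{integral}: the change of variables $u_C(x)=\int_{x_C}^x dy/F(y)$, the identity $\exp(\int_0^tF'(\varphi(s,x))ds)=F(\varphi(t,x))/F(x)$, and the identification $\mu_C(I_C)=\int_C\exp(-p\int_x^w\Re h(y)/F(y)\,dy)\rho(w)\,d\lambda(w)$ are all correct. The genuine gap is the sentence ``Since \cite[Theorem 6 and Proposition 9]{ArKaMa13} already reduces chaos of $T_{F,h}$ on $\Omega$ to such a componentwise statement, assembling over all $C$ yields i)$\iff$ii).'' The cited characterization is \emph{not} componentwise: it requires that for every finite family of distinct components $C_1,\dots,C_m$ and for $\lambda^m$-almost every $(x_1,\dots,x_m)\in\Pi_{j=1}^m C_j$ there exist a \emph{single common} $t>0$ with $\sum_{j=1}^m\sum_{l\in\Z}\rho_{lt,p}(x_j)<\infty$. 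Chaos and hypercyclicity do not pass from the summands of a (possibly infinite) direct sum to the sum, so ``each $R_C$ chaotic'' does not by itself give chaos of $T_{F,h}$, nor does the joint condition with a tuple-dependent $t$ obviously reduce to finiteness of each $\mu_C(I_C)$. Bridging exactly this is the analytic core of the paper: Lemma \ref{series} shows, using the boundedness of $F'$ and $\Re h$ together with \cite[Lemma 7]{Ka09}, that summability of $\sum_{k\in\Z}\rho_{kt_0,p}(x)$ for \emph{some} $t_0$ is equivalent to $\int_\R\rho_{t,p}(x)\,d\lambda(t)<\infty$ and hence to summability for \emph{all} $t_0$ (which trivializes the common-$t$ requirement), and Remark \ref{connected component} removes the almost-everywhere quantifier by showing finiteness is a property of the component, not of the point. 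Your argument never compares the discrete sums with the integral and never uses the hypotheses that $F'$ and $\Re h$ are bounded --- a clear sign that the decisive step is missing; to repair it you would have to prove a sums-versus-integrals comparison in your translation picture, which is essentially Lemma \ref{series} again.

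Two smaller misattributions: the oscillatory factor $\exp(i\int\Im h/F)$ is removable on each component by conjugation with the unimodular multiplier $\theta(\tau)=\exp(-i\int_{x_C}^{\psi_C(\tau)}\Im h(y)/F(y)\,dy)$, which is finite for every $\tau$ simply because $\Im h/F$ is continuous on $C$; so hypothesis b) is not ``exactly what makes the phase removable,'' and likewise a) does not ``control the decay of $\mu_C$.'' Both a) and b) are hypotheses you need in order to be allowed to invoke \cite[Theorem 6 and Proposition 9]{ArKaMa13} (in particular the treatment of periodic points in Proposition 9); they are inputs to the cited criterion, not consequences of your construction. The Godefroy--Shapiro aside suffers from the same global difficulty as the main argument: one must produce a spanning family of eigenvectors, with eigenvalues filling both half-planes and $i\R$, that is dense in the full space $L^p_\rho(\Omega)$, i.e.\ simultaneously across infinitely many components, and this is not addressed.
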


In order to prove Theorem \ref{simplified chaos}, we define for $x\in\Omega$, $p\geq 1$, and $t\geq 0$
\begin{eqnarray*}
	\rho_{t,p}(x)&=&\chi_{\varphi(t,\Omega)}(x)|h_t(\varphi(-t,x))|^p\exp(\int_0^{-t}F'(\varphi(s,x))ds)\,\rho(\varphi(-t,x))\\
	&=&\chi_{\varphi(t,\Omega)}(x)\exp(p\int_0^t\Re h(\varphi(s,\varphi(-t,x)))ds)\exp(\int_0^{-t}F'(\varphi(s,x))ds)\,\rho(\varphi(-t,x))
\end{eqnarray*}
as well as
\begin{eqnarray*}
	\rho_{-t,p}(x)&=&|h_t(x)|^{-p}\exp(\int_0^t F'(\varphi(s,x))ds)\,\rho(\varphi(t,x))\\
	&=&\exp(-p\int_0^t\Re h(\varphi(s,x))ds)\exp(\int_0^t F'(\varphi(s,x))ds)\,\rho(\varphi(t,x)).
\end{eqnarray*}
Then $\rho_{0,p}=\rho$, $\rho_{t,p}\geq 0$ for every $t\in\R$, and for fixed $x\in\Omega$ the mapping $t\mapsto\rho_{t,p}(x)$ is Lebesgue measurable. Moreover, it follows that
\begin{eqnarray}\label{Nummer1}
	\rho_{-(t+s),p}(x)&=&\exp(-p\int_0^{t+s}\Re h(\varphi(r,x))\,dr)\nonumber\\
	&&\cdot\exp(\int_0^{t+s}F'(\varphi(r,x))dr)\rho(\varphi(t,\varphi(s,x)))\nonumber\\
	&=&\exp(\int_0^s F'(\varphi(r,x))-p\Re h(\varphi(r,x))dr)\nonumber\\
	&&\cdot\exp(-p\int_0^t\Re h(\varphi(r,\varphi(s,x)))dr)\\
	&&\cdot\exp(\int_0^t F'(\varphi(r,\varphi(s,x)))dr)\rho(\varphi(t,\varphi(s,x)))\nonumber\\
	&=&\exp(\int_0^s F'(\varphi(r,x))-p\Re h(\varphi(r,x))dr)\rho_{-t,p}(\varphi(s,x))\nonumber
\end{eqnarray}
and analogously
\begin{eqnarray}\label{Nummer2}
	\rho_{(t+s),p}(x)&=&\chi_{\varphi(t+s,\Omega)}(x)\exp(p\int_{-(t+s)}^0\Re h(\varphi(r,x))-\frac{1}{p}F'(\varphi(r,x))dr)\nonumber\\
	&&\cdot\rho(\varphi(-(t+s),x))\\
	&=&\chi_{\varphi(s,\Omega)}(x)\exp(p\int_{-s}^0\Re h(\varphi(r,x))-\frac{1}{p}F'(\varphi(r,x))dr)\rho_{t,p}(\varphi(-s,x)).\nonumber
\end{eqnarray}

The following lemma will be used in the proof of the first auxilary result. We cite it for the reader's convenience.
For a proof see \cite[Lemma 7]{Ka09}.

\begin{lem}\label{aux}
	Let $\Omega\subseteq\R$ be open, let $F\in C^1(\Omega)$ be such that $\Omega$ is forward invariant under $F$, and let
	$h\in C(\Omega)$ be real valued. Moreover, for fixed $1\leq p<\infty$ let $\rho$ be $p$-admissible for $F$ and $h$. For
	$[a,b]\subset\Omega\backslash\{F=0\}$ set $\alpha:=a$ and $\beta:=b$ if $F_{|[a,b]}>0$, respectively $\alpha:=b$
	and $\beta:=a$ if $F_{|[a,b]}<0$.
	
	Then there is a constant $C>0$ such that
	\[\forall\,x\in [a,b]:\,\frac{1}{C}\leq\rho(x)\leq C\]
	as well as
	\[\forall\,t\in\R, x\in[a,b]:\,\frac{1}{C}\rho_{t,p}(\alpha)\leq\rho_{t,p}(x)\leq C\rho_{t,p}(\beta).\]
\end{lem}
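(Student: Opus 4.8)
The key preliminary observation is that $p$-admissibility of $\rho$ is nothing but the pointwise estimate $\rho(x)\le M e^{\omega t}\,\rho_{-t,p}(x)$ for all $t\ge 0$, $x\in\Omega$, which one reads off by comparing the admissibility inequality with the defining formula for $\rho_{-t,p}$. Put $g:=ph-F'\in C(\Omega)$. Since $h$ is real valued, the definitions of $\rho_{\pm t,p}$ unwind (using $\Re h=h$ and a change of variables in the $F'$-integral) to
\[\rho_{t,p}(x)=\chi_{\varphi(t,\Omega)}(x)\exp\!\Big(\int_0^t g(\varphi(s,\varphi(-t,x)))\,ds\Big)\rho(\varphi(-t,x)),\qquad \rho_{-t,p}(x)=\exp\!\Big(-\!\int_0^t g(\varphi(s,x))\,ds\Big)\rho(\varphi(t,x))\]
for $t\ge 0$.

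We may assume $F|_{[a,b]}>0$, so that $\alpha=a$ and $\beta=b$; the case $F|_{[a,b]}<0$ is symmetric. Because $F$ is bounded below by a positive constant on the compact interval $[a,b]$, for $a\le x\le x'\le b$ the orbit of $x$ increases and reaches $x'$ at the finite time $\tau=\tau(x,x'):=\int_x^{x'}\frac{dy}{F(y)}\le T:=\int_a^b\frac{dy}{F(y)}$, with $\varphi(w,x)\in[x,x']\subseteq[a,b]$ for $w\in[0,\tau]$; moreover, by forward invariance, $x\in\varphi(t,\Omega)$ forces $x'=\varphi(t,\varphi(\tau,\varphi(-t,x)))\in\varphi(t,\Omega)$, so the indicator factors never cause difficulty when moving from $x$ to a point on its right. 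The plan is to produce a constant $c=c(a,b,F,h,p,M,\omega)>0$ with
\[\rho_{t,p}(x')\ge c\,\rho_{t,p}(x)\qquad(t\in\R,\ a\le x\le x'\le b);\]
putting $x=a$ then gives $\rho_{t,p}(x)\ge c\,\rho_{t,p}(\alpha)$, putting $x'=b$ gives $\rho_{t,p}(x)\le c^{-1}\rho_{t,p}(\beta)$, and the case $t=0$ (recall $\rho_{0,p}=\rho$ and $\rho(a),\rho(b)\in(0,\infty)$) gives a two-sided pointwise bound for $\rho$ on $[a,b]$; enlarging $c^{-1}$ so as to absorb $\rho(a)$ and $\rho(b)$ then produces a single $C$ serving both assertions.

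The crux is the identity: for $t\in\R$ and $a\le x\le x'=\varphi(\tau,x)\le b$ (with $x\in\varphi(t,\Omega)$ in case $t\ge 0$),
\[\rho_{t,p}(x')=\exp\!\Big(\int_0^{\tau}g(\varphi(w,x))\,dw\Big)\cdot\frac{\rho_{-\tau,p}(\varphi(-t,x))}{\rho(\varphi(-t,x))}\cdot\rho_{t,p}(x).\]
One obtains it from the two displayed formulas by writing $\varphi(-t,x')=\varphi(\tau,\varphi(-t,x))$, splitting the resulting $g$-integrals along the orbit of $x$, and recognising the combination $\exp(-\int_0^\tau g(\varphi(w,z))\,dw)\,\rho(\varphi(\tau,z))$ as $\rho_{-\tau,p}(z)$ with $z=\varphi(-t,x)$; this is a short if somewhat fiddly bookkeeping computation, in the same spirit as the relations (\ref{Nummer1}) and (\ref{Nummer2}). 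In the identity the first factor lies in $[e^{-K},e^{K}]$ with $K:=T\sup_{[a,b]}|g|<\infty$ (continuity of $h$ and $F'$ on $[a,b]$, $\varphi(w,x)\in[a,b]$, $\tau\le T$), while the second factor is at least $M^{-1}e^{-\omega\tau}\ge M^{-1}e^{-|\omega|T}$ by the reformulated admissibility estimate applied at $\varphi(-t,x)\in\Omega$ with time $\tau$. Hence the desired inequality holds with $c:=M^{-1}e^{-K-|\omega|T}$; the cases excluded by the indicator are trivial, since there $\rho_{t,p}(x)=0$.

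I expect the real obstacle to be precisely this interplay: because $\rho$ is only assumed measurable, neither $\rho$ nor $\rho_{t,p}$ can be controlled on $[a,b]$ by a compactness argument, and the uniformity in $t$ has to be extracted from the admissibility inequality. The computation must therefore be arranged so that every $g$-integral that does not cancel runs over an orbit segment lying inside the compact set $[a,b]$, all long-time contributions being absorbed into the universal factor $M e^{\omega\tau}$.
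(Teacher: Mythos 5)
Your proposal is correct. Note that the paper does not actually contain a proof of Lemma \ref{aux}: it is quoted from \cite{Ka09} (Lemma 7 there), so there is no in-paper argument to compare with; your write-up is a valid self-contained proof in the same spirit as the cited one. I verified the key identity $\rho_{t,p}(x')=\exp(\int_0^{\tau}g(\varphi(w,x))\,dw)\,\rho_{-\tau,p}(\varphi(-t,x))\rho(\varphi(-t,x))^{-1}\rho_{t,p}(x)$ with $g=ph-F'$: it holds both for $t\ge 0$ (where the indicator and your observation that $x\in\varphi(t,\Omega)$ forces $x'=\varphi(\tau,x)\in\varphi(t,\Omega)$ are exactly what is needed) and for $t<0$, where $\varphi(-t,x)$ exists by forward invariance. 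The two estimates you use are also right: the cocycle factor is controlled by $e^{\pm K}$ with $K=T\sup_{[a,b]}|g|$ because the traversal time $\tau\le T=\int_a^b|F(y)|^{-1}dy$ is finite and the orbit segment stays in the compact set $[a,b]$, and the quotient $\rho_{-\tau,p}(\varphi(-t,x))/\rho(\varphi(-t,x))\ge M^{-1}e^{-|\omega|T}$ is precisely the reformulated $p$-admissibility $\rho\le Me^{\omega s}\rho_{-s,p}$ applied with $s=\tau$. Finally, the single one-sided inequality $\rho_{t,p}(x')\ge c\,\rho_{t,p}(x)$ in the flow direction does give both asserted bounds (take $x=\alpha$, respectively $x'=\beta$), and at $t=0$, since $\rho=\rho_{0,p}$ and $\rho(a),\rho(b)\in(0,\infty)$, it gives the two-sided pointwise bound on $\rho$; enlarging the constant to cover both statements is harmless, so the argument is complete.
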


\begin{lem}\label{series}
	Let $\Omega\subseteq\R$ be open and forward invariant under $F\in C^1(\Omega)$, let $h\in C(\Omega)$ be such that $F'$ and $\Re h$ are bounded. Moreover, let $\rho$ be $p$-admissible for $F$ and $h$, $1\leq p<\infty$. Then the following are equivalent.
	\begin{itemize}
		\item[i)] For all $x\in\Omega\backslash\{F=0\}$ there is $t_0>0$ such that $\sum_{k\in\Z}\rho_{k t_0,p}(x)<\infty$.
		\item[ii)] For all $x\in\Omega\backslash\{F=0\}: \int_\R \rho_{t,p}(x)d\lambda(t)<\infty$.
		\item[iii)] For all $x\in\Omega\backslash\{F=0\}$ and $t_0>0: \sum_{k\in\Z}\rho_{k t_0,p}(x)<\infty.$
	\end{itemize}
\end{lem}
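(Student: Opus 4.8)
The plan is to deduce all three statements from the single pointwise equivalence: for every $x\in\Omega\setminus\{F=0\}$ and every $t_0>0$,
\[\sum_{k\in\Z}\rho_{kt_0,p}(x)<\infty\quad\Longleftrightarrow\quad\int_\R\rho_{t,p}(x)\,d\lambda(t)<\infty.\]
Granting this, iii)$\Rightarrow$i) is trivial, i)$\Rightarrow$ii) follows by invoking the equivalence at each $x$ with the $t_0$ supplied by i), and ii)$\Rightarrow$iii) follows by invoking it at each $x$ and each $t_0$. So everything reduces to comparing, for fixed $x$ and $t_0$, a lattice sum with an integral. Fix such $x$ and $t_0$, let $C$ be the connected component of $\Omega\setminus\{F=0\}$ containing $x$, assume without loss of generality that $F>0$ on $C$ (the case $F<0$ being symmetric), and put $K:=\|F'\|_\infty+p\|\Re h\|_\infty<\infty$. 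Split both quantities as
\[\int_\R\rho_{t,p}(x)\,d\lambda(t)=\int_0^\infty\rho_{-t,p}(x)\,dt+\int_0^\infty\rho_{t,p}(x)\,dt,\qquad \sum_{k\in\Z}\rho_{kt_0,p}(x)=\sum_{n\geq0}\rho_{-nt_0,p}(x)+\sum_{n\geq1}\rho_{nt_0,p}(x),\]
and compare the two halves separately. (Whenever Lemma \ref{aux} is invoked below it is applied with $\Re h$ in place of $h$, which is legitimate because $\rho_{t,p}$ and the notion of $p$-admissibility only involve $\Re h$.)

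For the backward halves I would write $t=nt_0+\sigma$ with $\sigma\in[0,t_0]$ and apply \eqref{Nummer1}, obtaining $\rho_{-(nt_0+\sigma),p}(x)=\exp\big(\int_0^\sigma(F'-p\Re h)(\varphi(r,x))\,dr\big)\rho_{-nt_0,p}(\varphi(\sigma,x))$, where the exponential factor lies in $[e^{-Kt_0},e^{Kt_0}]$ and $\varphi(\sigma,x)$ ranges over the compact subinterval $[x,\varphi(t_0,x)]$ of $C$ (the forward orbit of $x$ stays in $C$ and meets neither $\{F=0\}$ nor $\partial\Omega$ in finite positive time). Applying Lemma \ref{aux} on this fixed subinterval bounds $\rho_{-nt_0,p}(\varphi(\sigma,x))$ above and below by constant multiples, \emph{independent of $n$}, of $\rho_{-nt_0,p}(\varphi(t_0,x))$ and $\rho_{-nt_0,p}(x)$; using \eqref{Nummer1} once more to replace $\rho_{-nt_0,p}(\varphi(t_0,x))$ by $\rho_{-(n+1)t_0,p}(x)$ up to a factor in $[e^{-Kt_0},e^{Kt_0}]$ and then integrating over $\sigma\in[0,t_0]$, I obtain constants $c_1,c_2>0$ independent of $n$ with $c_1\rho_{-nt_0,p}(x)\leq\int_{nt_0}^{(n+1)t_0}\rho_{-t,p}(x)\,dt\leq c_2\rho_{-(n+1)t_0,p}(x)$ for all $n\geq0$. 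Summing over $n\geq0$ yields $\int_0^\infty\rho_{-t,p}(x)\,dt<\infty$ if and only if $\sum_{n\geq0}\rho_{-nt_0,p}(x)<\infty$.

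For the forward halves I would distinguish two cases. If $x\in\varphi(t,\Omega)$ for every $t\geq0$, the argument mirrors the previous one, now based on \eqref{Nummer2}: with $t=nt_0+\sigma$ one has $\rho_{(nt_0+\sigma),p}(x)=\chi_{\varphi(\sigma,\Omega)}(x)\exp\big(\int_{-\sigma}^0(p\Re h-F')(\varphi(r,x))\,dr\big)\rho_{nt_0,p}(\varphi(-\sigma,x))$ with $\chi_{\varphi(\sigma,\Omega)}(x)=1$ and $\varphi(-\sigma,x)\in[\varphi(-t_0,x),x]\subset C$, so Lemma \ref{aux} on $[\varphi(-t_0,x),x]$ together with \eqref{Nummer2} gives two-sided bounds of $\int_{nt_0}^{(n+1)t_0}\rho_{t,p}(x)\,dt$ by constant multiples of $\rho_{nt_0,p}(x)$ and $\rho_{(n+1)t_0,p}(x)$, whence $\int_0^\infty\rho_{t,p}(x)\,dt<\infty$ if and only if $\sum_{n\geq1}\rho_{nt_0,p}(x)<\infty$. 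If instead $x\notin\varphi(t,\Omega)$ for some $t>0$, then $x\notin\varphi(t,\Omega)$ for all $t\geq\tau$ with $\tau\in(0,\infty)$, so the characteristic function in the definition of $\rho_{t,p}$ forces $\rho_{t,p}(x)=0$ for $t\geq\tau$; hence $\sum_{n\geq1}\rho_{nt_0,p}(x)$ has only finitely many nonzero (and finite) terms, and $\int_0^\infty\rho_{t,p}(x)\,dt=\int_0^\tau\rho_{t,p}(x)\,dt\leq\int_0^\tau Me^{\omega t}\rho(x)\,dt<\infty$, where the bound $\rho_{t,p}(x)\leq Me^{\omega t}\rho(x)$, valid for all $t\geq0$, follows by inserting the point $\varphi(-t,x)$ into the $p$-admissibility inequality for $\rho$ and cancelling the two factors $\exp\big(\pm\int_{-t}^0F'(\varphi(s,x))\,ds\big)$. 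In either case both forward quantities are finite together (trivially so in the second case). Combining the backward and forward equivalences gives the pointwise statement, and hence the lemma.

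I expect the only genuinely delicate point to be this last case, where the backward orbit of $x$ leaves $\Omega$ in finite time: there the Riemann-sum comparison breaks down, since $\rho$ may be unbounded near $\partial\Omega$, and it is exactly $p$-admissibility of $\rho$ — through the a priori estimate $\rho_{t,p}(x)\leq Me^{\omega t}\rho(x)$ — that keeps the relevant quantities finite. Everything else is a routine comparison of a lattice sum with an integral, Lemma \ref{aux} serving only to ensure that $\rho_{\pm nt_0,p}$ cannot oscillate too much over a single fundamental interval $[\varphi(nt_0,x),\varphi((n+1)t_0,x)]$.
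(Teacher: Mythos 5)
Your proof is correct and follows essentially the same route as the paper: the same splitting of $\int_\R\rho_{t,p}(x)\,d\lambda(t)$ and $\sum_{k\in\Z}\rho_{kt_0,p}(x)$ into forward and backward halves, the same comparison of each piece $\int_{nt_0}^{(n+1)t_0}$ with the neighbouring lattice terms via equations (\ref{Nummer1}) and (\ref{Nummer2}), the boundedness of $F'$ and $\Re h$, and Lemma \ref{aux} applied (for $F$ and $\Re h$) on the orbit segment of length $t_0$, together with the same case distinction according to whether $x\in\bigcap_{t\geq 0}\varphi(t,\Omega)$. Your only real deviation is organizational -- proving one two-sided pointwise equivalence instead of the separate implications i)$\Rightarrow$ii) and ii)$\Rightarrow$iii) -- plus a more explicit justification, via the $p$-admissibility estimate $\rho_{t,p}(x)\leq Me^{\omega t}\rho(x)$, of the finiteness of the forward integral when the backward orbit leaves $\Omega$ in finite time, a point the paper treats rather tersely.
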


\begin{proof}
	In order to show that i) implies ii) fix $x\in\Omega\backslash\{F=0\}$ and choose $t_0>0$ according to i) for $x$. We distinguish two cases. If $x$ belongs to $\cap_{t\geq 0}\varphi(t,\Omega)$ it follows by equation (\ref{Nummer2}) and the boundedness of $\Re h$ and $F'$
	\begin{eqnarray*}
		&&\int_{[0,\infty)}\rho_{t,p}(x)d\lambda(t)=\sum_{k=0}^\infty\int_{[0,t_0]}\rho_{k t_0+s,p}(x)d\lambda(s)\\
		&=&\sum_{k=0}^\infty\int_{[0,t_0]}\chi_{\varphi(s,\Omega)}(x)\exp(p\int_{-s}^0\Re h(\varphi(r,x))-\frac{1}{p}F'(\varphi(r,x))dr)\\
		&&\rho_{kt_0,p}(\varphi(-s,x))d\lambda(s)\\
		&\leq& C\sum_{k=0}^\infty\int_{[0,t_0]}\chi_{\varphi(s,\Omega)}(x)\rho_{k t_0,p}(\varphi(-s,x))d\lambda(s)\\
		&=& C\sum_{k=0}^\infty\int_{[0,t_0]}\rho_{k t_0,p}(\varphi(-s,x))d\lambda(s)\\
		&\leq &\begin{cases}\tilde{C}\sum_{k=0}^\infty\rho_{k t_0,p}(x)&, F(x)>0\\ \tilde{C}\sum_{k=0}^\infty\rho_{k t_0,p}(\varphi(-t_0,x))&, F(x)<0,\end{cases}
	\end{eqnarray*}
where $C$ and $\tilde{C}$ depend on $t_0$ and where in the last step we used lemma \ref{aux} for $F$ and $\Re h$.
Since by equation (\ref{Nummer2}) together with the boundedness of $F'$ and $\Re h$ we also have with suitable $D>0$ that for all $k\geq 0$
	\[\rho_{k t_0,p}(\varphi(-t_0,x))\leq D\rho_{(k+1)t_0,p}(x),\]
the above shows the existence of $\hat{C}>0$ such that
	\[\int_{[0,\infty)}\rho_{t,p}(x)d\lambda(t)\leq \hat{C}\sum_{k=0}^\infty\rho_{k t_0,p}(x).\]
	
If $x$ does not belong to $\cap_{t\geq 0}\varphi(t,\Omega)$ then $\int_{[0,\infty)}\rho_{t,p}(x)d\lambda(t)=\int_{[0,r]}\rho_{t,p}(x)d\lambda(t)$ for some $r>0$. Combining lemma \ref{aux} for $F$ and $\Re h$ with equation (\ref{Nummer2}), the boundedness of $F'$ and $\Re h$ gives for suitable $C>0$
	\begin{eqnarray*}
		&&\int_{[0,\infty)}\rho_{t,p}(x)d\lambda(t)\\
		&=&\int_{[0,r]}\chi_{\varphi(t,\Omega)}(x)\exp(p\int_{-t}^0\Re h(\varphi(s,x))-\frac{1}{p}F'(\varphi(s,x))ds)\rho_{0,p}(\varphi(-t,x))d\lambda(t)\\
		&\leq& C\int_{[0,r]}\chi_{\varphi(t,\Omega)}(x)\rho_{0,p}(\varphi(-t,x))d\lambda(t)<\infty.
	\end{eqnarray*}
Thus, if i) holds then $\int_{[0,\infty)}\rho_{t,p}(x)d\lambda(t)<\infty$ for all $x\in\Omega\backslash\{F=0\}$.
	
Moreover, by equation (\ref{Nummer1}) we obtain for every $x\in\Omega\backslash\{F=0\}$ together with the boundedness of $F'$ and $\Re h$
\begin{eqnarray*}
	&&\int_{(-\infty,0]}\rho_{t,p}(x)d\lambda(t)=\sum_{k=0}^\infty\int_{[-t_0,0]}\rho_{-(k t_0-s),p}(x)d\lambda(s)\\
	&=&\sum_{k=0}^\infty\int_{[-t_0,0]}\exp(\int_0^{-s} F'(\varphi(r,x))-p\Re h(\varphi(r,x)))\rho_{-k t_0,p}(\varphi(-s,x))d\lambda(s)\\
	&\leq& C\sum_{k=0}^\infty\int_{[-t_0,0]}\rho_{-k t_0,p}(\varphi(-s,x))d\lambda(s)\\
	&\leq& \begin{cases}\tilde{C}\sum_{k=0}^\infty \rho_{-k t_0,p}(\varphi(t_0,x))&, F(x)>0\\ \tilde{C}\sum_{k=0}^\infty \rho_{-k t_0,p}(x)&, F(x)<0,\end{cases}
\end{eqnarray*}	
where $C$ and $\tilde{C}$ again depend on $t_0$ and where in the last step we again used lemma \ref{aux} for $F$ and $\Re h$. Equation (\ref{Nummer1}) and the fact that $F'$ and $\Re h$ are bounded yield the existence of $D>0$ such that for all $k\geq 0$
\[\rho_{-k t_0,p}(\varphi(t_0,x))\leq D\rho_{-(k+1)t_0,p}(x).\]
So the above gives
\[\int_{(-\infty,0]}\rho_{t,p}(x)d\lambda(t)\leq\hat{C}\sum_{k=0}^\infty \rho_{-kt_0,p}(x)\]
for some $\hat{C}>0$. Hence, i) implies ii).
	
In order to show that ii) implies iii) we fix $t_0>0$ and $x\in\Omega\backslash\{F=0\}$ and distinguish again two cases. If $x$ does not belong to $\cap_{t\geq 0}\varphi(t,\Omega)$ there is $t_1>0$ such that $\rho_{t,p}(x)=0$ for all $t>t_1$. Therefore, $\sum_{k=0}^\infty\rho_{k t_0,p}(x)<\infty$.
	
In case of $x\in\cap_{t\geq 0}\varphi(t,\Omega)$ it follows from equation (\ref{Nummer2}) together with the boundedness of $F'$ and $\Re h$ that for some $C>0$
\begin{eqnarray*}
	&&\int_{[0,\infty)}\rho_{t,p}(x)d\lambda(t)=\sum_{k=0}^\infty\int_{[0,t_0]}\rho_{k t_0+t,p}(x)d\lambda(t)\\
	&=&\sum_{k=0}^\infty\int_{[0,t_0]}\exp(p\int_{-t}^0\Re h(\varphi(r,x))-\frac{1}{p}F'(\varphi(r,x))dr)\rho_{k t_0,p}(\varphi(-t,x))d\lambda(t)\\
	&\geq&\sum_{k=0}^\infty C\int_{[0,t_0]}\rho_{k t_0,p}(\varphi(-t,x))d\lambda(t)\\
	&\geq&\begin{cases}\tilde{C}\sum_{k=0}^\infty\rho_{k t_0,p}(x)&, F(x)<0\\ \tilde{C}\rho_{k t_0,p}(\varphi(-t_0,x))&, F(x)<0,\end{cases}
\end{eqnarray*}
where we used lemma \ref{aux} in the last step. By equation (\ref{Nummer2}) and the boundedness of $F'$ and $\Re h$ we have $\rho_{k t_0,p}(\varphi(-t_0,x))\geq D\rho_{(k+1)t_0,p}(x)$ for suitable $D>0$ such that the above gives
\begin{equation}\label{Nummer3}
\int_{[0,\infty)}\rho_{t,p}(x)d\lambda(t)\geq\hat{C}_1\sum_{k=0}^\infty\rho_{k t_0,p}(x)
\end{equation}
for some $\hat{C}_1$. 
	
Additionally, applying lemma \ref{aux} for $F$ and $\Re h$ we also obtain from the boundedness of $F'$ and $\Re h$ together with equation (\ref{Nummer1}) 
\begin{eqnarray*}
	&&\int_{(-\infty,0]}\rho_{t,p}(x)d\lambda(t)=\sum_{k=0}^\infty\int_{[-t_0,0]}\rho_{-(k t_0-t),p}(x)d\lambda(t)\\
	&=&\sum_{k=0}^\infty\int_{[-t_0,0]}\exp(\int_0^{-t}F'(\varphi(r,x))-p\Re h(\varphi(r,x))dr)\rho_{-k t_0,p}(\varphi(-t,x))d\lambda(t)\\
	&\geq & C\sum_{k=0}^\infty\int_{[-t_0,0]}\rho_{-k t_0,p}(\varphi(-t,x))d\lambda(t)\\
	&\geq & \begin{cases}\tilde{C}\sum_{k=0}^\infty\rho_{-k t_0,p}(x)&, F(x)>0\\ \tilde{C}\sum_{k=0}^\infty\rho_{-k t_0,p}(\varphi(-t_0,x))&, F(x)<0\end{cases}\\
	&\geq &\hat{C}_2\sum_{k=0}^\infty \rho_{-k t_0,p}(x).
\end{eqnarray*}
Hence, together with (\ref{Nummer3}), iii) follows from  ii), and as iii) obviously implies i), the lemma is proved.
\end{proof}

The applicability of the previous lemma depends on an explicit knowledge of $\varphi$. The next lemma shows that
the integrals appearing in the previous result can be expressed in terms of $F$, $h$, and $\rho$.

\begin{lem}\label{integral}
	Let $\Omega\subseteq\R$ be open and forward invariant under $F\in C^1(\Omega)$, $h\in C(\Omega)$ and let $\rho$ be
	$p$-admissible for $F$ and $h$, $1\leq p<\infty$. Then for every $x\in\Omega\backslash\{F=0\}$ we have
	\[\int_\R\rho_{t,p}(x)d\lambda(t)=\frac{1}{|F(x)|}\int_{C(x)}\exp(p\int_w^x\frac{\Re h(y)}{F(y)}dy)\rho(w)d\lambda(w),\]
	where $C(x)$ denotes the connected component of $\Omega\backslash\{F=0\}$ containing $x$.
\end{lem}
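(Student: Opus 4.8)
The strategy is to parametrise the whole orbit of $x$ by a single time variable and to reduce the identity to a one-dimensional change of variables. Write $C(x)=(c,d)$ for the connected component of $\Omega\backslash\{F=0\}$ containing $x$. Since $F$ is continuous and nonvanishing on the interval $C(x)$ it has constant sign there, and since $\Omega$ is forward invariant the maximal orbit through $x$ equals $C(x)$: setting $T^{*}:=-\inf J(x)\in(0,\infty]$, the map $\psi:(-\infty,T^{*})\rightarrow C(x),\ \psi(t):=\varphi(-t,x)$, is a $C^1$-diffeomorphism with $\psi(0)=x$ and $\psi'(t)=-F(\psi(t))$. I would also record two elementary facts, valid for all $z\in\Omega$ and $t\in J(z)$: differentiating $t\mapsto F(\varphi(t,z))$ gives
\[F(\varphi(t,z))=F(z)\exp\Big(\int_0^t F'(\varphi(s,z))\,ds\Big),\]
and the substitution $y=\varphi(s,z)$, $dy=F(y)\,ds$ (legitimate on an orbit, where $F\ne 0$) transforms $\int_0^t g(\varphi(s,z))\,ds$ into $\int_z^{\varphi(t,z)}\frac{g(y)}{F(y)}\,dy$ for continuous $g$.

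Using these, I would rewrite $\rho_{t,p}(x)$ in terms of $w:=\psi(t)$. For $t\ge 0$ one has $\varphi(s,\varphi(-t,x))=\varphi(s,w)$ and $\varphi(t,w)=x$; applying the substitution fact to the $\Re h$-integral (with $z=w$) and the exponential identity (with $z=x$ and $t$ replaced by $-t$) to the $F'$-integral yields
\[\rho_{t,p}(x)=\chi_{\varphi(t,\Omega)}(x)\,\exp\Big(p\int_w^x\frac{\Re h(y)}{F(y)}\,dy\Big)\frac{F(w)}{F(x)}\,\rho(w),\]
and here $\chi_{\varphi(t,\Omega)}(x)=1$ exactly for $t<T^{*}$, i.e.\ exactly on the domain of $\psi$. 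For $t<0$ the analogous manipulation (with $z=x$ and $\varphi(-t,x)=w$) produces the same expression without the characteristic factor. Because $w,x\in C(x)$ and $F$ has constant sign on $C(x)$, the ratio $F(w)/F(x)$ equals $|F(w)|/|F(x)|$, and the inner integral is a finite integral of a continuous function over a compact subinterval of $C(x)$.

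It remains to substitute $w=\psi(t)$ in
\[\int_{\R}\rho_{t,p}(x)\,d\lambda(t)=\int_{(-\infty,T^{*})}\rho_{t,p}(x)\,d\lambda(t),\]
where I used that $\rho_{t,p}(x)=0$ for $t\ge T^{*}$. Since $\psi$ is a $C^1$-diffeomorphism onto $C(x)$ with $|\psi'(t)|=|F(\psi(t))|$, the change of variables turns the right-hand side into $\int_{C(x)}(\cdots)\,|F(w)|^{-1}\,d\lambda(w)$ (the orientation of $\psi$ being irrelevant, as the integrand is nonnegative; that $\rho$ is only defined up to a null set is likewise harmless, diffeomorphisms preserving null sets). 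The factor $F(w)/F(x)$ combines with $|F(w)|^{-1}$ to the constant $1/|F(x)|$, which gives
\[\int_{\R}\rho_{t,p}(x)\,d\lambda(t)=\frac{1}{|F(x)|}\int_{C(x)}\exp\Big(p\int_w^x\frac{\Re h(y)}{F(y)}\,dy\Big)\rho(w)\,d\lambda(w),\]
as claimed, the identity being understood in $[0,\infty]$ with no convergence assumed. The main point to get right is the description of the maximal orbit as $C(x)$ together with the matching observation that $\chi_{\varphi(t,\Omega)}(x)$ cuts the $t$-integral down precisely to the domain of $\psi$; the rest is bookkeeping with the two elementary ODE identities and the substitution.
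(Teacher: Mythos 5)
Your argument is correct and is essentially the paper's proof: both reduce $\int_\R\rho_{t,p}(x)\,d\lambda(t)$ to a spatial integral over the orbit of $x$, which is identified with $C(x)$, via the substitution $y=\varphi(s,x)$ (using $\partial_1\varphi=F\circ\varphi$) and a change of variables in $t$ with Jacobian $|F|$. The only difference is organizational: you parametrize the whole orbit at once and absorb the $F'$-term up front through the identity $F(\varphi(t,z))=F(z)\exp(\int_0^tF'(\varphi(s,z))ds)$, whereas the paper treats the forward and backward half-orbits $C^{\pm}(x)$ separately and converts $\int\frac{F'}{F}$ into $\log|F|$ only in the final step.
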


\begin{proof}
Fix $x\in\Omega\backslash\{F=0\}$ and let $C(x)$ be as in the lemma. Observe that $\varphi(t,x)\in C(x)$ for all
$t\in J(x)$ and that $\varphi(J(x),x)=C(x)$, where $J(x)$ is the domain of the maximal solution $\varphi(\cdot,x)$ of the initial value problem $\dot{y}=F(y), y(0)=x$. Obviously,
\[\int_\R \rho_{t,p}(x)d\lambda(t)=\int_{[0,\infty)}\rho_{t,p}(x)d\lambda(t)+\int_{[0,\infty)}\rho_{-t,p}(x)d\lambda(x).\]
We set $C^+(x)=\{\varphi(t,x);\,t\geq 0\}$. Applying the Transformation Formula for Lebesgue integrals we obtain with equation (\ref{Nummer1})
\begin{eqnarray*}
	&&\int_{[0,\infty)}\rho_{-t,p}(x)d\lambda(t)\\
	&=&\int_{[0,\infty)}\exp(\int_0^t F'(\varphi(r,x))-p\Re h(\varphi(r,x)) dr)\rho(\varphi(t,x))d\lambda(t)\\
	&=&\int_{[0,\infty)}\exp(\int_0^t\frac{F'(\varphi(r,x))-p\Re h(\varphi(r,x))}{F(\varphi(r,x))}\partial_1\varphi(r,x) dr)\rho(\varphi(t,x))d\lambda(t)\\
	&=&\int_{[0,\infty)}\exp(\int_x^{\varphi(t,x)}\frac{F'(y)-p\Re h(y)}{F(y)}dy)\rho(\varphi(t,x))d\lambda(t)
\end{eqnarray*}
\begin{eqnarray*}
	&=&\int_{(0,\infty)}\frac{\exp(\int_x^{\varphi(t,x)}\frac{F'(y)-p\Re h(y)}{F(y)} dy)}{|F(\varphi(t,x))|}\rho(\varphi(t,x))|\partial_1\varphi(t,x)|d\lambda(t)\\
	&=&\int_{C^+(x)}\frac{\exp(\int_x^w \frac{F'(y)-p\Re h(y)}{F(y)}dy)}{|F(w)|}\rho(w)d\lambda(w)\\
	&=&\int_{C^+(x)}\frac{\exp(\int_w^x \frac{p\Re h(y)-F'(y)}{F(y)}dy)}{|F(w)|}\rho(w)d\lambda(w).
\end{eqnarray*}
Moreover, denoting $\alpha=\sup\{t\geq 0;\,x\in\varphi(t,\Omega)\}$ we have $-\alpha=\inf J(x)$. With $C^-(x)=\varphi((-\alpha,0],x)$ it follows $C(x)=C^+(x)\cup C^-(x)$, $C^+(x)\cap C^-(x)=\{x\}$, and
\begin{eqnarray*}
	&&\int_{[0,\infty)}\rho_{t,p}(x)d\lambda(t)=\int_{[0,\alpha)}\rho_{t,p}(x)d\lambda(t)\\
	&=&\int_{[0,\alpha)}\exp(\int_{-t}^0 p\Re h(\varphi(r,x))-F'(\varphi(r,x))dr)\rho(\varphi(-t,x))d\lambda(t)\\
	&=&\int_{(-\alpha,0]}\exp(\int_t^0 p\Re h(\varphi(r,x))-F'(\varphi(r,x))dr)\rho(\varphi(t,x))d\lambda(t)\\
	&=&\int_{(-\alpha,0]}\exp(\int_t^0\frac{p\Re h(\varphi(r,x))-F'(\varphi(r,x))}{F(\varphi(r,x))}\partial_1\varphi(r,x)dr)\rho(\varphi(t,x))d\lambda(t)\\
	&=&\int_{(-\alpha,0]}\exp(\int_{\varphi(t,x)}^x\frac{p\Re h(y)-F'(y)}{F(y)}dy)\rho(\varphi(t,x))d\lambda(t)\\
	&=&\int_{(-\alpha,0]}\frac{\exp(\int_{\varphi(t,x)}^x\frac{p\Re h(y)-F'(y)}{F(y)}dy)}{|F(\varphi(t,x))|}\rho(\varphi(t,x))|\partial_1\varphi(t,x)|d\lambda(t)\\
	&=&\int_{C^-(x)}\frac{\exp(\int_w^x\frac{p\Re h(y)-F'(y)}{F(y)}dy)}{|F(w)|}\rho(w)d\lambda(w).
\end{eqnarray*}
Combining these equations yields
\begin{eqnarray*}
	&&\int_\R \rho_{t,p}(x)d\lambda(t)\\
	&=&\int_{C(x)}\frac{\exp(\int_w^x\frac{p\Re h(y)-F'(y)}{F(y)}dy)}{|F(w)|}\rho(w)d\lambda(w)\\
	&=&\int_{C(x)}\frac{\exp(p\int_w^x\frac{\Re h(y)}{F(y)}dy)\exp(\log|F(w)|-\log|F(x)|)}{|F(w)|}\rho(w)d\lambda(w)\\
	&=&\frac{1}{|F(x)|}\int_{C(x)}\exp(p\int_w^x\frac{\Re h(y)}{F(y)}dy)\rho(w)d\lambda(w)
\end{eqnarray*}
which proves the lemma. 
\end{proof}

\begin{rem}\label{connected component}
\begin{rm}
The last step in the above proof shows that for $x\in\Omega\backslash\{F=0\}$ and all $v\in C(x)$ we have for every
$1\leq p<\infty$
\begin{eqnarray*}
	&&\int_\R\rho_{t,p}(x)d\lambda(t)\\
	&=&\frac{1}{|F(x)|}\int_{C(x)}\exp(p\int_w^x\frac{\Re h(y)}{F(y)}dy)\rho(w)d\lambda(w)\\
	&=&\frac{|F(v)|}{|F(x)|}\exp(p\int_v^x\frac{\Re h(y)}{F(y)}dy)\frac{1}{|F(v)|}\int_{C(x)}\exp(p\int_x^v\frac{\Re h(y)}{F(y)}dy)\rho(w)d\lambda(w)\\
	&=&\frac{|F(v)|}{|F(x)|}\exp(p\int_v^x\frac{\Re h(y)}{F(y)}dy)\int_\R\rho_{t,p}(v)d\lambda(t).
\end{eqnarray*}
Thus, under the hypotheses of Lemma \ref{integral} the following are equivalent for every connnected component $C$ of $\Omega\backslash\{F=0\}$ and all $1\leq p<\infty$.
\begin{itemize}
	\item[i)] $\exists\,x\in C:\,\int_\R\rho_{t,p}(x)d\lambda(t)<\infty$,
	\item[ii)] $\forall\,x\in C:\,\int_\R\rho_{t,p}(x)d\lambda(t)<\infty$,
	\item[iii)] $\exists\,x\in C:\,\int_C\exp(-p\int_x^w\frac{\Re h(y)}{F(y)}dy)\rho(w)d\lambda(w)<\infty$,
	\item[iv)] $\forall\,x\in C:\,\int_C\exp(-p\int_x^w\frac{\Re h(y)}{F(y)}dy)\rho(w)d\lambda(w)<\infty$.
\end{itemize}
\end{rm}
\end{rem}

We have now everything at hand to prove Theorem \ref{simplified chaos}.\\

{\it Proof of Theorem \ref{simplified chaos}.} By \cite[Theorem 6 and Proposition 9]{ArKaMa13} $T_{F,h}$ is chaotic on $L^p_\rho(\Omega)$ if and only if $\lambda(\{F=0\})=0$ as well as for every $m\in\N$ for which there are $m$ different connected components $C_1,\ldots,C_m$ of $\Omega\backslash \{F=0\}$, for $\lambda^m$-almost all choices of $(x_1,\ldots,x_m)\in\Pi_{j=1}^m C_j$ there is $t>0$ such that
\[\sum_{j=1}^m\sum_{l\in\Z}\rho_{lt,p}(x_j)<\infty.\]
By lemma \ref{series}, this holds precisely when $\lambda(\{F=0\})=0$ and when for $\lambda$-almost every $x\in\Omega\backslash\{F=0\}$
\[\int_\R\rho_{t,p}(x)d\lambda(t)<\infty.\]
Thus, applying Remark \ref{connected component}, Theorem \ref{simplified chaos} follows.\hfill$\square$.\\

\begin{rem}\label{remark}
\begin{rm}
a) Inspection of the proof of Theorem \ref{simplified chaos} yields the following. Under the hypothesis of Theorem \ref{simplified chaos}, the following are equivalent for $\rho$ $p$-admissible for $F$ and $h$.
\begin{itemize}
	\item[i)] $T_{F,h}$ is chaotic in $L^p_\rho(\Omega)$.
	\item[ii)] $\lambda(\{F=0\})=0$ and for all $x\in\Omega\backslash\{F=0\}$ there is $t_0>0$ such that 
	$\sum_{k\in\Z}\rho_{k t_0,p}(x)<\infty$.
	\item[iii)] $\lambda(\{F=0\})=0$ and $\sum_{k\in\Z}\rho_{k t_0,p}(x)<\infty$ for all $x\in\Omega\backslash\{F=0\}$ 
	and all $t_0>0$.
	\item[iv)] $\lambda(\{F=0\})=0$ and $\int_\R\rho_{t,p}(x)d\lambda(t)<\infty$ for all $x\in\Omega\backslash\{F=0\}$.
	\item[v)] $\lambda(\{F=0\})=0$ and for every connected component $C$ of $\Omega\backslash\{F=0\}$
	\[\int_C\exp(-p\int_x^w\frac{\Re h(y)}{F(y)}dy)\rho(w)d\lambda(w)<\infty\]
	for some/all $x\in C$.
\end{itemize}

b) If $h=0$ and if $F\in C^1(\Omega)$ is as usual then the $p$-admissibility of $\rho$ does not depend on $p$. If moreover
$F'$ is bounded the following are then equivalent.
\begin{itemize}
	\item[i)] $T_F=T_{F,0}$ is chaotic in $L^p_\rho(\Omega)$ for some/all $p\in[1,\infty)$.
	\item[ii)] $\lambda(\{F=0\})=0$ and for every connected component $C$ of $\Omega\backslash\{F=0\}$ we have
	\[\int_C\rho(w)d\lambda(w)<\infty.\]
\end{itemize}
\end{rm}
\end{rem}

\begin{example}
\begin{rm}
a) Let $\Omega\in\{(0,\infty),\R\}$ and let $F(x)=1$. Then $\Omega$ is forward invariant under $F$. Moreover, let $h\in C(\Omega)$ be such that $\Re h$ is bounded. It follows from the definition, that $\rho=1$ is $p$-admissible for $F$ and $h$ for every $1\leq p<\infty$ so that $T_{1,h}$ is a well defined $C_0$-semigroup on $L^p(\Omega)$, the so-called perturbed translation semigroup. If $h$ is bounded the generator of $T_{1,h}$ in $L^p(\Omega)$ is given by
\[A_p:W^{1,p}(\Omega)\rightarrow L^p(\Omega), A_pf(x)=f'+hf,\]
where $f'$ denotes the distributional derivative of $f$ (see e.g.\ \cite[Theorem 15]{ArKaMa13}).

If $\Im h\in L^1(0,\beta)$, resp.\ $\Im h\in L^1(-\infty,\beta)$ for all $\beta\in\Omega$ or if $\Im h\in L^1(\beta,\infty)$ for all $\beta\in\Omega$, by Theorem \ref{simplified chaos} this $C_0$-semigroup is chaotic on $L^p(\Omega)$ if and only if
\[\int_\Omega\exp(-p\int_1^w\Re h(y)dy)d\lambda(w)<\infty.\]

b) Consider again $\Omega\in\{(0,\infty),\R\}$ and let $F(x)=1$. Moreover, let $\rho$ be $p$-admissible for $F$ and $h=0$ (which does not depend on $p$ by Remark \ref{remark} b)). We then obtain the classical translation semigroup and Remark \ref{remark} a) gives the well-known characterizations of chaos for this semigroup due to Matsui, Yamada, and Takeo \cite{MaYaTa03,MaYaTa04} and deLaubenfels and Emamirad \cite{deLaEm01}, respectively.

c) Consider $\Omega=(0,1)$ and let $F(x)=-x$. Then $\Omega$ is forward invariant for $F$. Additionally, let $h\in C(0,1)$ be such that $\Re h$ is bounded. It follows again from the definition that $\rho=1$ is $p$-admissible for $F$ and $h$ for every $1\leq p<\infty$. Thus, we obtain a well-defined $C_0$-semigroup $T_{-id,h}$ on $L^p(0,1)$. If $h$ is bounded the generator of this semigroup in $L^p(\Omega)$ is given by
\[A_p:\{f\in L^p(0,1);xf'(x)\in L^p(0,1)\}\rightarrow L^p(\Omega), A_pf(x)=-xf'(x)+h(x)f(x),\]
where $f'$ denotes again the distributional derivative of $f$ (see e.g.\ \cite[Theorem 15]{ArKaMa13}). 

If $x\mapsto\frac{\Im h(x)}{x}\in L^1(0,\beta)$ for all $\beta\in (0,1)$ or if $x\mapsto\frac{\Im h(x)}{x}\in L^1(\beta,1)$ for all $\beta\in (0,1)$, by Theorem \ref{simplified chaos} this $C_0$-semigroup is chaotic on $L^p(\Omega)$ precisely when for some $x\in(0,1)$
\[\int_{(0,1)}\exp(-p\int_x^w\frac{\Re h(y)}{-y}dy)d\lambda(w)<\infty.\]
Because of
\[\exp(p\int_x^w\frac{\Re h(y)}{y}dy)=\Big(\frac{w}{x}\Big)^{p\Re h(0)}\exp(p\int_x^w\frac{\Re h(y)-\Re h(0)}{y}dy)\]
this generalizes a result of Dawidowicz and Poskrobko \cite{DaPo} who showed that in case of a real valued $h\in C[0,1]$ for which $x\mapsto\frac{h(x)-h(0)}{x}\in L^1(0,1)$ the above semigroup is chaotic on $L^p(0,1)$ if and only if $h(0)>-1/p$.

d) Consider $\Omega=(0,1)$ and $F(x)=-x^3\sin(\frac{1}{x})$. Because we have $\lim_{x\rightarrow 0}F(x)= 0$ and $\lim_{x\rightarrow 1}F(x)\leq 0$ it follows that $\Omega$ is forward invariant under $F$ and since $F'$ is bounded $\rho=1$ is $p$-admissible for $F$ and $h=0$ for every $1\leq p<\infty$. Thus, $T_F$ is a well-defined $C_0$-semigroup on $L^p(0,1)$. By \cite[Theorem 15]{ArKaMa13} its generator is
\begin{eqnarray*}
&&A_p:\{f\in L^p(0,1);\,-x^3\sin(\frac{1}{x})f'(x)\in L^p(0,1)\}\rightarrow L^p(0,1),\\
&&A_pf(x)=-x^2\sin(\frac{1}{x})f'(x)
\end{eqnarray*}
where $f'$ denotes the distributional derivative of $f$. By Remark \ref{remark} it follows that this $C_0$-semigroup is chaotic on $L^p(0,1)$ for every $1\leq p<\infty$. 
\end{rm}
\end{example}

\section{Weighted composition $C_0$-semigroups on Sobolev spaces}

For a bounded interval $(a,b)$, let $F\in C^1[a,b]$ with $F(a)=0$ be such that $(a,b)$ is forward invariant under $F$, and let $h\in W^{1,\infty}[a,b]$ be such that
\begin{itemize}
	\item[1)] $\forall\,x\in\{F=0\}:\,h(x)=h(a)\in\R$,
	\item[2)] the function $[a,b]\rightarrow\R, y\mapsto\frac{h(y)-h(a)}{F(y)}$ belongs to $L^\infty[a,b]$.
\end{itemize}
In \cite{ArKaMa13} it is shown that under the above hypothesis the operator
\[A_p:\{f\in W^{1,p}[a,b];\, Ff''\in L^p[a,b]\}\rightarrow W^{1,p}[a,b], A_pf=Ff'+hf,\]
where the derivatives are taken in the distributional sense, is the generator of a $C_0$-semigroup $S_{F,h}$ on $W^{1,p}[a,b]\, (1\leq p<\infty)$ which is given by
\[\forall\,t\geq 0, f\in W^{1,p}[a,b]:\,S(t)f(x)=h_t(x)f(\varphi(t,x)).\]
Moreover, it is shown in \cite{ArKaMa13} that this $C_0$-semigroup $S_{F,h}$ is never hypercyclic on $W^{1,p}[a,b]$. In particular, $S_{F,h}$ cannot be chaotic on $W^{1,p}[a,b]$. 

Because of $F(a)=0$, the closed subspace
\[W^{1,p}_*[a,b]:=\{f\in W^{1,p}[a,b];\,f(a)=0\}\]
of $W^{1,p}[a,b]$ is invariant under $S_{F,h}$ such that the restriction of $S_{F,h}$ to $W^{1,p}_*[a,b]$ defines a
$C_0$-semigroup on $W^{1,p}_*[a,b]$ which we denote again by $S_{F,h}$. Its generator is given by
\[A_{p,*}:\{f\in W^{1,p}_*[a,b];\, Ff''\in L^p[a,b]\}\rightarrow W^{1,p}[a,b], A_{p,*}f=Ff'+hf,\]
see \cite{ArKaMa13}. Using Theorem \ref{simplified chaos} we derive the following characterization of chaos for $S_{F,h}$ on $W^{1,p}_*[a,b]$.

\begin{theo}\label{simplified in Sobolev}
Let $(a,b)$ be a bounded interval, $F\in C^1[a,b]$ with $F(a)=0$ such that $(a,b)$ is forward invariant under $F$.
Moreover, let $h\in W^{1,\infty}[a,b]$ be such that
\begin{itemize}
	\item[1)] $\forall\, x\in\{F=0\}:\,h(x)=h(a)\in\R$,
	\item[2)] the function $[a,b]\rightarrow\C, y\mapsto\frac{h(y)-h(a)}{F(y)}$ belongs to $L^\infty[a,b]$.
	\end{itemize}
	Then, for the $C_0$-semigroup $S_{F,h}$ on $W^{1,p}_*[a,b]$ the following are equivalent.
	\begin{itemize}
		\item[i)] $S_{F,h}$ is chaotic.
		\item[ii)] $\lambda(\{F=0\})=0$ and for every connected component $C$ of $(a,b)\backslash\{F=0\}$
		\[\int_C\exp(-p\int_x^w\frac{F'(y)+ h(a)}{F(y)}dy)d\lambda(w)<\infty\]
		for some/all $x\in C$.
	\end{itemize}
\end{theo}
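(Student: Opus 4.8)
The plan is to deduce Theorem \ref{simplified in Sobolev} from Theorem \ref{simplified chaos} by exhibiting $S_{F,h}$ on $W^{1,p}_*[a,b]$ as similar to a weighted composition $C_0$-semigroup on a \emph{weighted} Lebesgue space, the weight being $|F|^{-p}$. Put $\sigma:=\frac{h-h(a)}{F}$, which lies in $L^\infty[a,b]$ by hypothesis 2), and consider $Vf:=f'+\sigma f$. A standard variation of constants argument for $f'+\sigma f=g,\ f(a)=0$ (the coefficient $\sigma$ being bounded and $[a,b]$ bounded) shows that $V\colon W^{1,p}_*[a,b]\to L^p(a,b)$ is an isomorphism. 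Differentiating $S(t)f(x)=h_t(x)f(\varphi(t,x))$ and using $\partial_1\varphi(s,x)=F(\varphi(s,x))/F(x)$ on $\Omega\setminus\{F=0\}$, hence $h_t'(x)=h_t(x)\frac{h(\varphi(t,x))-h(x)}{F(x)}$, one checks after a short cancellation that $V\,S_{F,h}(t)=T_{F,F'+h}(t)\,V$, where $(F'+h)_t=h_t\,\partial_1\varphi(t,\cdot)$ and $T_{F,F'+h}$ is a well-defined $C_0$-semigroup on $L^p(a,b)$ since $F'$ and $\Re h$ are bounded. Since moreover $\partial_1\varphi(t,x)=|F(\varphi(t,x))|/|F(x)|$, the isometry $g\mapsto|F|^{-1}g$ from $L^p_{|F|^{-p}}(a,b)$ onto $L^p(a,b)$ intertwines $T_{F,h}$ on the weighted space with $T_{F,F'+h}$ on the standard one. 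Composing, $S_{F,h}$ on $W^{1,p}_*[a,b]$ is similar to $T_{F,h}$ on $L^p_{|F|^{-p}}(a,b)$, so in particular one is chaotic exactly when the other is. As chaos of $T_{F,F'+h}$ on $L^p(a,b)$ forces $\lambda(\{F=0\})=0$ (by the characterization from \cite{ArKaMa13} recalled in the proof of Theorem \ref{simplified chaos}), and ii) also requires $\lambda(\{F=0\})=0$, I may assume $\lambda(\{F=0\})=0$, so that $|F|^{-p}$ is a legitimate weight.

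The point of routing through $|F|^{-p}$ is that it moves the factor $F'$ out of the multiplier: the multiplier is now $h$ itself, which by hypothesis 1) equals the real constant $h(a)$ on $\{F=0\}$, so hypothesis a) of Theorem \ref{simplified chaos} holds with $\gamma=h(a)$. The remaining hypotheses are immediate: $F'$ is bounded since $F\in C^1[a,b]$, $\Re h$ is bounded since $h\in W^{1,\infty}[a,b]$, hypothesis b) holds because $\Im h/F=\Im\sigma\in L^\infty[a,b]\subset L^1[a,b]$, and $|F|^{-p}$ is $p$-admissible for $F$ and $h$ because, using $\exp(\int_0^tF'(\varphi(s,x))\,ds)=|F(\varphi(t,x))|/|F(x)|$, the admissibility inequality reduces to $\exp\!\big(p\!\int_0^t\!\Re h(\varphi(s,x))ds+(p-1)\!\int_0^t\!F'(\varphi(s,x))ds\big)\le Me^{\omega t}$, which follows from boundedness of $\Re h$ and $F'$.

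Theorem \ref{simplified chaos} applied to $T_{F,h}$ on $L^p_{|F|^{-p}}(a,b)$ then gives: $S_{F,h}$ is chaotic iff $\lambda(\{F=0\})=0$ and, for every connected component $C$ of $(a,b)\setminus\{F=0\}$,
\[\int_C\exp\!\big(-p\!\int_x^w\tfrac{\Re h(y)}{F(y)}\,dy\big)\,|F(w)|^{-p}\,d\lambda(w)<\infty\]
for some/all $x\in C$. To match the stated form, note that on $C$ the function $F$ has constant sign, so $F'/F=(\log|F|)'$ there and hence
\[\exp\!\big(-p\!\int_x^w\tfrac{F'(y)+h(a)}{F(y)}\,dy\big)=|F(x)|^p\,|F(w)|^{-p}\,\exp\!\big(-p\,h(a)\!\int_x^w\tfrac{dy}{F(y)}\big);\]
on the other hand $\exp(-p\int_x^w\frac{\Re h(y)}{F(y)}dy)$ and $\exp(-p\,h(a)\int_x^w\frac{dy}{F(y)})$ differ by the factor $\exp(-p\int_x^w\Re\sigma(y)\,dy)$, which is bounded above and below uniformly in $w\in C$ because $\Re\sigma\in L^\infty[a,b]$ and $|w-x|\le b-a$. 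Hence the two integrability conditions are equivalent, which yields Theorem \ref{simplified in Sobolev}.

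The heart of the matter is the choice of weight together with the similarity $V\,S_{F,h}(t)=T_{F,F'+h}(t)\,V$: conjugating $S_{F,h}$ to a weighted composition semigroup on the \emph{standard} space $L^p(a,b)$ only produces $T_{F,F'+h}$, and for that semigroup hypothesis a) of Theorem \ref{simplified chaos} may genuinely fail, since $F'$ need not be constant on $\{F=0\}$ (already for $F(x)=-x^3\sin(1/x)$ on $(0,1)$ with $h=0$). Passing to $L^p_{|F|^{-p}}$ removes exactly this obstruction. The only nonroutine computation is the cancellation $\frac{h(\varphi(t,x))-h(x)}{F(x)}+\sigma(x)=\partial_1\varphi(t,x)\,\sigma(\varphi(t,x))$ underlying the intertwining relation, which is where hypotheses 1) and 2) enter (through $h\in W^{1,\infty}$ and $\sigma\in L^\infty$, so that the chain rule along trajectories is valid a.e.).
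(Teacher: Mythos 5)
Your proof is correct, but it follows a genuinely different route from the paper's. The paper disposes of the theorem in a few lines: it quotes \cite[Theorem 20 and Proposition 24]{ArKaMa13} for the fact that $S_{F,h}$ on $W^{1,p}_*[a,b]$ is conjugate to $T_{F,F'+h(a)}$ on the \emph{unweighted} space $L^p[a,b]$, transfers chaos by the Comparison Principle, and then applies Theorem \ref{simplified chaos} with $\rho=1$, which produces exactly the integrand $\exp(-p\int_x^w\frac{F'(y)+h(a)}{F(y)}dy)$ of condition ii). You instead construct the conjugacy by hand, $Vf=f'+\sigma f$ with $\sigma=(h-h(a))/F$ (essentially the inverse of the map used in \cite{ArKaMa13}), land on $T_{F,F'+h}$ on $L^p(a,b)$, and then re-weight by $|F|^{-p}$ so that the multiplier becomes $h$ before invoking Theorem \ref{simplified chaos}. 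This extra step buys something real: hypothesis a) of Theorem \ref{simplified chaos} for the multiplier $F'+h(a)$ (or $F'+h$) amounts to $F'$ being constant on $\{F=0\}$, which is not implied by the hypotheses of Theorem \ref{simplified in Sobolev} (your $F(x)=-x^3\sin(1/x)$ example is apt), whereas after your re-weighting the multiplier equals $h(a)\in\R$ on $\{F=0\}$ and hypotheses a), b) are immediate; the paper's one-line application of Theorem \ref{simplified chaos} implicitly leans on the underlying results of \cite{ArKaMa13} rather than on its literal hypotheses, so your detour makes the deduction verifiable from what is actually assumed. Two points to tighten: the intertwining $VS_{F,h}(t)=T_{F,F'+h}(t)V$ must be checked a.e.\ on $\{F=0\}$ as well, since you use it \emph{before} concluding $\lambda(\{F=0\})=0$; this is fine because $x\mapsto\int_0^t h(\varphi(s,x))ds$ is Lipschitz and constant ($=th(a)$) on $\{F=0\}$, so its derivative and likewise $F'$ vanish a.e.\ there---say this explicitly instead of appealing vaguely to an a.e.\ chain rule. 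And for the necessity of $\lambda(\{F=0\})=0$ cite the hypercyclicity part of \cite[Theorem 6]{ArKaMa13} (or the invariant multiplication semigroup on $L^p(\{F=0\})$), since the chaos characterization recalled in the proof of Theorem \ref{simplified chaos} is stated under hypotheses that $F'+h$ need not satisfy; finally, fix a version of the weight $|F|^{-p}$ on the null set $\{F=0\}$ so that $p$-admissibility is stated for all $x$.
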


\begin{proof}
Observe that by the boundedness of $F'$ on $[a,b]$ $\rho=1$ is $p$-admissible for $F$ and $F'+h(a)$ for any $1\leq p<\infty$. Under the above hypothesis 1) and 2) it is shown in \cite[Theorem 20 and Proposition 24]{ArKaMa13} that
the $C_0$-semigroups $S_{F,h}$ on $W^{1,p}_*[a,b]$ and $T_{F,F'+h(a)}$ on $L^p[a,b]$ are conjugate, i.e.\ there is a
homeomorphism $\Phi:L^p[a,b]\rightarrow W^{1,p}_*[a,b]$ such that $S_{F,h}(t)\circ\Phi=\Phi\circ T_{F,F'+h(a)}(t)$ for
every $t\geq 0$. By the so-called Comparison Principle (see e.g.\ \cite[Proposition 7.7]{GEPe11}) it follows that
$S_{F,h}$ is chaotic on $W^{1,p}_*[a,b]$ if and only if $T_{F,F'+h(a)}$ is chaotic on $L^p[a,b]$. Thus, an application of Theorem \ref{simplified chaos} proves the theorem.
\end{proof}

\begin{example}
\begin{rm}
a) We consider $(a,b)=(0,1)$ and $F(x)=-x$. Then, $(0,1)$ is forward invariant under $F$. For every $h\in W^{1,\infty}[0,1]$ with $h(0)\in\R$ and
\[[0,1]\rightarrow\C,y\mapsto\frac{h(y)-h(0)}{y}\in L^\infty[0,1]\]
the operator
\[A:\{f\in W^{1,p}_*[a,b];\, xf''(x)\in L^p[a,b]\}\rightarrow W^{1,p}[a,b], Af(x)=-xf'(x)+h(x)f(x),\]
generates a $C_0$-semigroup on $W^{1,p}_*[0,1], 1\leq p<\infty$. By Theorem \ref{simplified in Sobolev} this semigroup is chaotic on $W^{1,p}_*[0,1]$ if and only if for some $x\in (0,1]$
\[\int_{[0,1]}\Big(\frac{w}{x}\Big)^{p(h(0)-1)}d\lambda(w)=\int_{[0,1]}\exp(-p\int_x^w\frac{-1+ h(0)}{-y}dy)d\lambda(w)<\infty\]
which holds precisely when $p(h(0)-1)>-1$, i.e.\ when $h(0)>1-\frac{1}{p}$ (see also \cite[Theorem 27]{ArKaMa13}).

b) Let again $(a,b)=(0,1)$. We consider $F(x)=-x(1-x)$ so that $(0,1)$ is forward invariant under $F$. For each $h\in W^{1,\infty}[0,1]$ with $h(0)=h(1)\in\R$ and
\[[0,1]\rightarrow\C,y\mapsto\frac{h(y)-h(0)}{y(1-y)}\in L^\infty[0,1]\]
the operator
\begin{eqnarray*}
&&A:\{f\in W^{1,p}_*[a,b];\, x(1-x)f''(x)\in L^p[a,b]\}\rightarrow W^{1,p}[a,b],\\
&&Af(x)=-x(1-x)f'(x)+h(x)f(x),
\end{eqnarray*}
generates a $C_0$-semigroup on $W^{1,p}_*[0,1], 1\leq p<\infty$. Since for any $x\in (0,1)$ the function
\[w\mapsto\exp(-p\int_x^w\frac{F'(y)-h(0)}{F(y)}dy)=w^{-p(1+h(0))}(1-w)^{-p(1-h(0))}(1-x)^{p(1-h(0))}x^{p(1+h(0))}\]
does not belongs to $L^1(0,1)$ for any value of $h(0)$ it follows from Theorem \ref{simplified in Sobolev} that this semigroup is not chaotic.
\end{rm}
\end{example}

\begin{small}
{\sc Technische Universit\"at Chemnitz,
Fakult\"at f\"ur Mathematik, 09107 Chemnitz, Germany}

{\it E-mail address: thomas.kalmes@mathematik.tu-chemnitz.de}
\end{small}

\end{document}